\newcommand{\lb}{\varLambda}
\newcommand{\bg}{\begin{equation}}
\newcommand{\ed}{\end{equation}}
\newcommand{\bga}{\begin{eqnarray}}
\newcommand{\eda}{\end{eqnarray}}
\newtheorem {Theorem}  {Theorem}
\numberwithin{Theorem}{section}
\newtheorem {Lemma}[Theorem]  {Lemma}
\theoremstyle{definition}
\theoremstyle{remark}
\numberwithin{equation}{section}
\renewcommand{\th}{\theta}
\newcommand{\R}{\mathbf{R}}
\renewcommand{\div}{\mbox{div}}
\newcommand{\Aa}{{\mathcal A}}
\def  \R   {{\mathbb R}}
\def  \T   {{\mathbb T}}
\def  \12  {{\frac{1}{2}}}
\begin{document}

\title[Determining wavenumber for subcritical SQG]{On the determining wavenumber for the nonautonomous subcritical SQG equation}

\author [Alexey Cheskidov]{Alexey Cheskidov}
\address{Department of Mathematics, Stat. and Comp.Sci.,  University of Illinois Chicago, Chicago, IL 60607,USA}
\email{acheskid@uic.edu} 
\author [Mimi Dai]{Mimi Dai}
\address{Department of Applied Mathematics, Stat. and Comp.Sci.,  University of Illinois Chicago, Chicago, IL 60607,USA}
\email{mdai@uic.edu} 

\thanks{The work of A. Cheskidov was partially supported by NSF grant DMS--1517583. The work of M. Dai was partially supported by NSF Grant DMS--1815069.}

\begin{abstract}
A time-dependent determining wavenumber was introduced in \cite{CD-sqg1} to estimate the number of determining modes for the surface quasi-geostrophic (SQG) equation.
In this paper we continue this investigation focusing on the subcritical case and study trajectories  inside an absorbing set bounded in $L^\infty$. Utilizing this bound we find
a time-independent determining wavenumber that improves the estimate obtained in \cite{CD-sqg1}. This classical approach is more direct, but it is contingent on the existence
of the $L^\infty$ absorbing set.

\bigskip

KEY WORDS:  Subcritical quasi-geostrophic equation, determining modes, global attractor.

\hspace{0.02cm}CLASSIFICATION CODE: 35Q35, 37L30.
\end{abstract}

\maketitle

\section{Introduction}
\label{sec-intro}

In this paper we estimate the number of determining modes for the forced subcritical surface quasi-geostrophic (SQG) equation  (see \cite{CMT})
\begin{equation}\begin{split}\label{QG}
\frac{\partial\th}{\partial t}+u\cdot\nabla \th+\nu\Lambda^\alpha\th =f,\\
u=R^\perp\th,
\end{split}
\end{equation}
where $x \in \mathbb T^2=[0,L]^2$, $1<\alpha<2$, $\nu>0$, $\Lambda=\sqrt{-\Delta}$ is the Zygmund operator, and
\bg\notag
R^\perp\th=\Lambda^{-1}(-\partial_2\th,\partial_1\th).
\ed
The initial data $\th(0) \in L^2(\mathbb{T}^2)$ and the force $f \in L^\infty(0,\infty; L^p(\mathbb{T}^2))$ for some $p>2/\alpha$ are assumed to have zero average.

A time-dependent determining wavenumber $\lb(t)$ was introduced in \cite{CD-sqg1} in the case
where $\alpha \in (0,2)$ and the force could be potentially rough. The determining wavenumber was
defined based only on the structure of the equation and  without any requirements on the regularity 
of solutions. It was shown that if two solutions coincide below $\lb(t)$,  the difference between them  decay exponentially,
even when they are far away from the attractor. Moreover, $\lb(t)$ was shown to be
uniformly bounded for all the solutions on the global attractor when $\alpha \in [1,2)$ and $f \in L^p$,
$p>2/\alpha$, in which case the attractor is bounded in $L^\infty$.  In this paper we investigate
this situation further and present a different, more direct approach in the subcritical case
$\alpha \in (1,2)$. Here we consider solutions that already entered an $L^\infty$ absorbing set and
take advantage of the $L^\infty$ bound (which is proportional to the $L^p$-norm of the force) to
define a time-independent  determining wavenumber $\lb$ and improve the final estimate for the number of
determining modes that we had in \cite{CD-sqg1}.
The drawback of this method is that it is less general and works only for regular solutions in the $L^\infty$
absorbing set. For a more complete background on the topic of finite dimensionality of flows, we refer the readers to \cite{CD-sqg1, CDK, CFMT, CFT, FJKT, FJKTgeneral,  FMRT, FMTT, FP,FT, FTiti} and references therein.

For the critical SQG equation ($\alpha=1$), due to the balance of the nonlinear term and the dissipative term, the global regularity problem was  challenging. However it was solved by different authors using different sophiscated methods in \cite{CaV, CCV, CVicol, KN09, KNV}. For the subcritical SQG equation with $1<\alpha<2$, the dissipative term dominates. 
In this case the global regularity was obtained in \cite{Re}.

In this paper, we will consider forces $f \in L^\infty(I; L^p(\mathbb{T}^2))$, $p>2/\alpha$, where $I=(0,\infty)$  or $(-\infty,\infty)$,  such that
\[
\sup_{t\in I} \|f(t)\|_p \leq F,
\]
for some fixed $F >0$. Then $\{\theta \in L^2: \|\theta\|_\infty \leq R_\infty\}$ is an absorbing set in $L^2$ (see Section~\ref{absorbing_sets}), where
\begin{equation} \label{eq:boundOnR}
R_\infty \sim  \lambda_0^{\frac{2}{p}-\alpha}\frac{F}{\nu}.
\end{equation}
Here $\lambda_0 = 1/L$. We prove the following.

\begin{Theorem}\label{thm}
Let $\alpha \in (1,2)$, $l>\frac{\alpha}{\alpha-1}$, and $Q \in \mathbb{N}$ be such that
\[
\lb := \lambda_0 2^Q \geq \left(\frac{Cl^2R_\infty}\nu\right)^{\frac1{\alpha-1}},
\]
where $C$ is some absolute constant.
 Let $\th_1(t)$ be a solution of \eqref{QG} with $f=f_1$ and $\th_2(t)$ be a solution to \eqref{QG}
with $f=f_2$. If
\begin{equation*} \label{eq:dm-condition}
\|\th_1(t)_{\leq Q}-\th_2(t)_{\leq Q}\|_{B^0_{l,l}} \to 0, \qquad \text{and} \qquad \|f_1-f_2\|_{B^{-\alpha(1-\frac{1}{l})}_{l,l}} \to 0, \qquad \text{as} \qquad  t \to \infty,
\end{equation*}
then
\begin{equation*} \label{eq:boundonldif-expon}
\|\th_1(t) - \th_2(t)\|_{B^0_{l,l}}^l \to 0 \qquad \text{as} \qquad t \to \infty.
\end{equation*}

Moreover, if $\th_1(t)$ and $\th_2(t)$ are two complete (ancient) solutions of \eqref{QG} with $f \in L^\infty((-\infty, \infty);L^p)$, $p>2/\alpha$, such that
$\th_1, \th_2 \in L^\infty((-\infty,\infty);L^2)$ and
\begin{equation} \label{eq:dm-condition}
\th_1(t)_{\leq Q}=\th_2(t)_{\leq Q}, \qquad \forall t<0,
\end{equation}
then
\[
\th_1(t) = \th_2(t), \qquad \forall t \in \mathbb{R}.
\]   
\end{Theorem}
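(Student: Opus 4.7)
The strategy is to derive a dissipative dyadic differential inequality for the high-frequency part of $w := \th_1 - \th_2$ in the $B^0_{l,l}$ norm, and then run Gronwall forward in time for the first claim and backward in time for the second.

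I would start by subtracting the two copies of \eqref{QG} to get
\begin{equation*}
\p_t w + u_1\cdot\nabla w + (u_1-u_2)\cdot\nabla\th_2 + \nu \Lambda^\alpha w = f_1 - f_2,
\end{equation*}
then apply the Littlewood--Paley block $\Delta_q$, multiply by $|w_q|^{l-2}w_q$, and integrate in $x$. Using Bony's paraproduct decomposition together with the standard $L^l$ commutator estimates (the same toolkit employed in the critical case in \cite{CD-sqg1}), the transport contribution at dyadic scale $q$ is controlled by $C l \|\th_i\|_\infty \lambda_q \|w_q\|_l^l$ plus off-diagonal couplings to neighbouring dyadic shells, while the C\'ordoba--C\'ordoba inequality gives a dissipative gain $c\nu\lambda_q^\alpha\|w_q\|_l^l$. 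Inserting the $L^\infty$ bound $\|\th_i\|_\infty \le R_\infty$ and the threshold $\lb^{\alpha-1} \ge C l^2 R_\infty/\nu$, for every $q \ge Q$ the transport term is absorbed by a fraction of the dissipation, leaving a strictly positive remainder of size $\kappa \nu\lambda_q^\alpha\|w_q\|_l^l$. Summing over $q>Q$, handling off-diagonal couplings by H\"older in $l$-exponents and estimating the source through duality with $B^{-\alpha(1-1/l)}_{l,l}$, one arrives at the master inequality
\begin{equation*}
\frac{d}{dt}\sum_{q>Q}\|w_q\|_l^l + \kappa\nu\lb^\alpha \sum_{q>Q}\|w_q\|_l^l \le C\bigl(\|w_{\le Q}\|_{B^0_{l,l}}^l + \|f_1-f_2\|_{B^{-\alpha(1-1/l)}_{l,l}}^l\bigr).
\end{equation*}

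For the first conclusion I would apply Gronwall: the factor $e^{-\kappa\nu\lb^\alpha(t-s)}$ combined with the hypothesis that both forcing terms on the right vanish as $t\to\infty$ yields $\sum_{q>Q}\|w_q(t)\|_l^l \to 0$, and together with the assumed decay of the low modes this proves the asymptotic statement. For the ancient case, the hypotheses $f_1=f_2$ and $w_{\le Q}(t)=0$ for $t<0$ kill the right-hand side, leaving a homogeneous inequality on $(-\infty,0)$. Complete solutions in $L^\infty(\R;L^2)$ of \eqref{QG} enter and remain in the $L^\infty$ absorbing set for all time (see \cite{CD-sqg1}), so $\|w(t)\|_\infty \le 2R_\infty$ uniformly on $\R$, which together with $\|w\|_2$ gives a uniform bound on $\|w(t)\|_{B^0_{l,l}}^l$. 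Integrating the homogeneous inequality from $t_0$ up to $t$ with $t_0<t<0$ and sending $t_0\to-\infty$ forces $\sum_{q>Q}\|w_q(t)\|_l^l \equiv 0$ for $t<0$. Combined with $w_{\le Q}\equiv 0$ on $(-\infty,0)$, this yields $\th_1(t)=\th_2(t)$ for $t<0$, and forward uniqueness of subcritical SQG solutions \cite{Re} propagates the equality to all of $\R$.

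The main obstacle I anticipate is the dyadic commutator accounting required to produce the sharp constant $Cl^2$ (rather than some larger power of $l$) in the transport estimate: this constant is exactly what enters the threshold $\lb^{\alpha-1} \gtrsim l^2 R_\infty/\nu$ in the statement of the theorem. Tracking it sharply requires careful exploitation of the low--high structure in the paraproduct decomposition of $(u_1-u_2)\cdot\nabla \th_2$ and a balanced application of H\"older with conjugate exponents $l$ and $l'=l/(l-1)$ when estimating terms of the form $\int \Delta_q\bigl[(u_1-u_2)\cdot\nabla\th_2\bigr]\,|w_q|^{l-2}w_q\,dx$. The condition $l>\alpha/(\alpha-1)$ is presumably what ensures the off-diagonal couplings are summable and can be folded into the diagonal dissipative term. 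The remaining ingredients---smoothness of subcritical solutions, the $L^\infty$ absorbing-set bound \eqref{eq:boundOnR}, and forward uniqueness---are classical.
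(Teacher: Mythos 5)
Your proposal follows essentially the same route as the paper: the same $L^l$ dyadic energy estimate on $w=\th_1-\th_2$ via Bony's paraproduct, the commutator bound \eqref{commu}, and the C\'ordoba--C\'ordoba-type dissipation lemma, with the threshold $\lb^{\alpha-1}\gtrsim l^2R_\infty/\nu$ absorbing the transport terms, followed by a Gr\"onwall argument forward in time (with vanishing forcing) and backward in time on the attractor using the interpolation bound $\|w\|_{B^0_{l,l}}\lesssim\|w\|_\infty^{1-2/l}\|w\|_2^{2/l}$. The only difference is bookkeeping: you close the inequality for the high-mode sum $\sum_{q>Q}\|w_q\|_l^l$ with decay rate $\nu\lb^\alpha$ and add the low modes afterwards, whereas the paper keeps the full $B^0_{l,l}$ norm with rate $\sim\nu\lambda_0^\alpha$ and places all low-mode contributions in the vanishing right-hand side $\psi(t)$ --- an equivalent rearrangement of the same estimates.
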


The second part of the theorem concerns solutions on the pullback attractor 
\[
\Aa(t) = \{\th(t): \th(\cdot) \text{ is a complete bounded solution, i.e., } \th \in L^\infty((-\infty,\infty);L^2)\},
\]
that describes the long time behavior of solutions as the initial time goes to minus infinity. The fact that $\Aa(t)$ is indeed a pullback attractor follows, for example, from the general framework \cite{CK}. 

In the particular case of a time independent force $f \in L^p$, all the time slices of $\Aa(t)$ coincide, and
\[
\Aa = \Aa(t), \qquad \forall t \in \mathbb{R},
\]
is the global attractor.
Again, in the subcritical case $\alpha >1$,  it is easy to show that $\Aa$ is a global attractor by virtue of classical methods, or applying the evolutionary system
framework \cite{C} that requires the existence of an absorbing ball, energy inequality, and continuity of trajectories. This method does not require proving 
the existence of a compact absorbing set, and was used in \cite{CD} to show that $\Aa$ is the global attractor in the critical case $\alpha=1$ (see also \cite{CCV}
for the existence of the global attractor in $H^1$). In addition, in the autonomous case $f \in L^p$, the number of determining modes was estimated in \cite{CD-sqg1} using a much more general method applicable to subcritical, critical, and supercritical regimes. Theorem~\ref{thm} provides an improvement to the upper bound on $\lb$ in the subcritical case $\alpha >1$.

\bigskip

\section{Preliminaries}
\label{sec:pre}

\subsection{Notations}
We denote by $A\lesssim B$ an estimate of the form $A\leq C B$ with
some absolute constant $C$, and by $A\sim B$ an estimate of the form $C_1
B\leq A\leq C_2 B$ with some absolute constants $C_1$, $C_2$. 
We write $\|\cdot\|_p=\|\cdot\|_{L^p}$, and $(\cdot, \cdot)$ stands for the $L^2$-inner product.

\subsection{Littlewood-Paley decomposition}
\label{sec:LPD}
We recall briefly the Littlewood-Paley decomposition theory, which is one of the main techniques used in the paper. For a more detailed description on  this theory we refer readers to the books \cite{BCD, Gr}. 

Denote $\lambda_q=\frac{2^q}{L}$ for integers $q$. A nonnegative radial function $\chi\in C_0^\infty(\R^n)$ is chosen such that 
\begin{equation}\label{eq-xi}
\chi(\xi)=
\begin{cases}
1, \ \ \mbox { for } |\xi|\leq\frac{3}{4}\\
0, \ \ \mbox { for } |\xi|\geq 1.
\end{cases}
\end{equation}
Let 
\bg\notag
\varphi(\xi)=\chi(\xi/2)-\chi(\xi)
\ed
and
\begin{equation}\notag
\varphi_q(\xi)=
\begin{cases}
\varphi(2^{-q}\xi)  \ \ \ \mbox { for } q\geq 0,\\
\chi(\xi) \ \ \ \mbox { for } q=-1.
\end{cases}
\end{equation}
For a tempered distribution vector field $u$,  its  Littlewood-Paley projection $u_q$ is defined as follows.
\begin{equation}\notag
\begin{split}
& h_q:=\sum_{k\in \mathbb Z^n}\varphi_q(k)e^{i\frac{2\pi k\cdot x}{L}}\\
&u_q:=\Delta_qu=\sum_{k\in \mathbb Z^n}\hat u_k\varphi_q(k)e^{i\frac{2\pi k\cdot x}{L}}=\frac{1}{L^2}\int_{\T^2} h_q(y)u(x-y)dy,  \qquad q\geq -1,
\end{split}
\end{equation}
where $\hat{u}_k$ is the $k$th Fourier coefficient of $u$. 
Then we have
\bg\notag
u=\sum_{q=-1}^\infty u_q
\ed
in the distributional sense. We also denote
\bg\notag
u_{\leq Q}=\sum_{q=-1}^Qu_q, \qquad u_{(Q,R]}=\sum_{p=Q+1}^Ru_p, \qquad \tilde u_q=\sum_{|p-q|\leq 1}u_p. 
\ed
The Besov $B^s_{l,l}$-norm is defined as
\[
\|u\|_{B^s_{l,l}} = \left( \sum_{q=-1}^\infty \lambda_q^s \|u_q\|_l^l \right)^\frac{l}{l}.
\]
The following inequalities will be frequently used:
\begin{Lemma}\label{le:bern}(Bernstein's inequality) 
Let $n$ be the space dimension and $r\geq s\geq 1$. Then for all tempered distributions $u$, 
\bg\notag
\|u_q\|_{r}\leq \lambda_q^{n(\frac{1}{s}-\frac{1}{r})}\|u_q\|_{s}.
\ed
\end{Lemma}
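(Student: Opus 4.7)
The plan is to follow the standard proof of Bernstein's inequality, which exploits the fact that $u_q$ is already frequency-localized: $\widehat{u_q}$ is supported in the annulus where $\varphi_q \neq 0$, of inner and outer scale comparable to $\lambda_q$ (and in a ball of scale $\lambda_0$ when $q=-1$). Consequently, convolving $u_q$ against any kernel whose Fourier symbol equals $1$ on $\mathrm{supp}\,\varphi_q$ reproduces $u_q$ exactly, so we can upgrade an $L^s$ bound to an $L^r$ bound via Young's inequality at the cost of the $L^t$ norm of a single fattened kernel.

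Concretely, the first step is to fix once and for all an auxiliary radial $\tilde\varphi \in C_c^\infty(\R^n)$ with $\tilde\varphi \equiv 1$ on $\mathrm{supp}\,\varphi$, supported in a slightly enlarged annulus, and set $\tilde\varphi_q(\xi) := \tilde\varphi(2^{-q}\xi)$ for $q \geq 0$ (with an analogous fixed low-frequency $\tilde\chi$ for $q=-1$). Let $\tilde h := \mathcal{F}^{-1}\tilde\varphi$ on $\R^n$, a Schwartz function that does not depend on $q$, and let its periodic analogue be
\[
\tilde h_q(x) := \sum_{k \in \Z^n} \tilde\varphi_q(k)\, e^{i 2\pi k \cdot x / L}.
\]
Since $\tilde\varphi_q \cdot \varphi_q = \varphi_q$ on the Fourier side, $u_q = L^{-n}\, \tilde h_q * u_q$ on $\T^n$. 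Applying Young's convolution inequality with exponents $1 + 1/r = 1/t + 1/s$ yields
\[
\|u_q\|_r \lesssim \|\tilde h_q\|_t \, \|u_q\|_s,
\]
so everything is reduced to proving $\|\tilde h_q\|_{L^t(\T^n)} \lesssim \lambda_q^{n(1/s - 1/r)}$ uniformly in $q$.

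For this bound, the second step is a pure scaling on $\R^n$: the Schwartz kernel $\lambda_q^n \tilde h(\lambda_q \cdot)$ has $L^t(\R^n)$-norm exactly $\lambda_q^{n(1-1/t)}\|\tilde h\|_{L^t(\R^n)} = \lambda_q^{n(1/s - 1/r)}\|\tilde h\|_{L^t(\R^n)}$, and $\tilde h$ is a fixed Schwartz function, so its $L^t$ norm is controlled by an absolute constant uniform in $t \in [1,\infty]$. One then checks that $\tilde h_q$ is (up to $L$-dependent normalization) the periodization of $\lambda_q^n\tilde h(\lambda_q\cdot)$ and that the periodization preserves the $L^t$ norm up to an absolute constant.

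The only genuine obstacle is this last passage from the line to the torus with a constant that is uniform in $q$. For $q \geq 0$ with $\lambda_q \geq \lambda_0$, the Schwartz decay of $\tilde h$ ensures the periodic tails contribute at most an absolute constant, so the bound transfers. For $q = -1$ the kernel is a fixed trigonometric polynomial supported on finitely many Fourier modes, and the inequality reduces to a direct application of H\"older's inequality on $\T^n$, which I expect to handle in a single line.
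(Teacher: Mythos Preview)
The paper does not prove this lemma; Bernstein's inequality is stated as a standard result of Littlewood--Paley theory, with the books \cite{BCD, Gr} cited at the top of Section~\ref{sec:LPD} for background. Your outline is precisely the classical textbook argument (reproduce $u_q$ via convolution with a fattened multiplier, apply Young's inequality, and compute the kernel's $L^t$ norm by scaling), and it is correct as sketched; in particular the passage from $\R^n$ to $\T^n$ via Poisson summation/periodization with a uniform constant goes through because the rescaled Schwartz kernel $\lambda_q^n\tilde h(\lambda_q\cdot)$ has tails decaying uniformly in $q\geq 0$, and the $q=-1$ case is just H\"older on the torus.
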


\begin{Lemma}\label{lp}
Assume $2<l<\infty$ and $0\leq \alpha\leq 2$. Then
\begin{equation}\notag
l\int u_q\Lambda^\alpha u_q|u_q|^{l-2} \, dx\gtrsim  \lambda_q^\alpha\|u_q\|_l^l.
\end{equation}
\end{Lemma}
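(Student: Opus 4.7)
The plan is a three-step reduction: first symmetrize the integral using the hypersingular kernel of $\Lambda^\alpha$, then apply an elementary pointwise convexity inequality to majorize the integrand, and finally invoke a quantitative lower Bernstein estimate that exploits the spectral localization of $u_q$.

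Using the representation $\Lambda^\alpha f(x) = c_\alpha\,\mathrm{p.v.}\int(f(x)-f(y))/|x-y|^{n+\alpha}\,dy$ (or its periodic analogue on $\TT^2$) and symmetrizing the double integral in $(x,y)$ yields
\[
l\int u_q \Lambda^\alpha u_q |u_q|^{l-2}\,dx = \frac{l c_\alpha}{2}\iint \frac{\bigl(u_q(x)-u_q(y)\bigr)\bigl(\phi(u_q(x))-\phi(u_q(y))\bigr)}{|x-y|^{n+\alpha}}\,dx\,dy,
\]
where $\phi(s)=s|s|^{l-2}$. For $l\geq 2$ and all real $a,b$, the elementary convexity inequality
\[
(a-b)\bigl(\phi(a)-\phi(b)\bigr) \geq \frac{4(l-1)}{l^2}\bigl(\Phi(a)-\Phi(b)\bigr)^2, \qquad \Phi(s):= s|s|^{l/2-1},
\]
applied pointwise under the integral gives
\[
l\int u_q \Lambda^\alpha u_q |u_q|^{l-2}\,dx \gtrsim \|\Lambda^{\alpha/2}\Phi(u_q)\|_2^2.
\]
Since $\|\Phi(u_q)\|_2^2 = \|u_q\|_l^l$, the lemma reduces to the quantitative estimate
\[
\|\Lambda^{\alpha/2}\Phi(u_q)\|_2^2 \gtrsim \lambda_q^\alpha \|\Phi(u_q)\|_2^2.
\]

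This final inequality is the main obstacle: $\Phi(u_q)$ is a nonlinear image of $u_q$ and is therefore not Fourier-supported in the annulus $\{|\xi|\sim\lambda_q\}$, so the ordinary (upper) Bernstein inequality of Lemma \ref{le:bern} does not apply directly. The standard remedy, following the lower Bernstein scheme of Chen--Miao--Zhang, is to split $\Phi(u_q) = \Phi(u_q)_{\leq q-N} + \Phi(u_q)_{>q-N}$ for a large universal $N$, apply the ordinary Bernstein to the high-frequency part, and show via the spectral localization of $u_q$ that $\|\Phi(u_q)_{\leq q-N}\|_2 \leq (1-\delta)\|\Phi(u_q)\|_2$ for some universal $\delta>0$; this yields the required bound with a constant depending only on $l$ and $\alpha$. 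Alternatively, one may prove the lemma directly by establishing the heat-semigroup contraction $\|e^{-t\Lambda^\alpha}u_q\|_l \leq e^{-ct\lambda_q^\alpha}\|u_q\|_l$ for spectrally localized $u_q$ and differentiating in $t$ at $t=0$, bypassing $\Phi$ altogether.
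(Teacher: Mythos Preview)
The paper does not supply its own proof of this lemma but simply refers to \cite{CMZ, CC}; your sketch is precisely the route taken in those references---the C\'ordoba--C\'ordoba kernel symmetrization and convexity reduction to $\|\Lambda^{\alpha/2}(|u_q|^{l/2-1}u_q)\|_2^2$, followed by the Chen--Miao--Zhang lower Bernstein inequality for the nonlinear image $\Phi(u_q)$. The argument is correct and coincides with the cited proofs.
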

For a proof of Lemma \ref{lp}, see \cite{CMZ, CC}.

\subsection{Bony's paraproduct and commutator}
\label{sec-para}

Bony's paraproduct formula will be used to decompose the nonlinear terms. We will use the same version as in \cite{CD}:
\begin{equation}\notag
\begin{split}
\Delta_q(u\cdot\nabla v)=&\sum_{|q-p|\leq 2}\Delta_q(u_{\leq{p-2}}\cdot\nabla v_p)+
\sum_{|q-p|\leq 2}\Delta_q(u_{p}\cdot\nabla v_{\leq{p-2}})\\
&+\sum_{p\geq q-2} \Delta_q(\tilde u_p \cdot\nabla v_p).
\end{split}
\end{equation}
Some terms in this decomposition will be estimated using commutators. Let
\begin{equation} \label{eq:CommutatodDef}
[\Delta_q, u_{\leq{p-2}}\cdot\nabla]v_p:=\Delta_q(u_{\leq{p-2}}\cdot\nabla v_p)-u_{\leq{p-2}}\cdot\nabla \Delta_qv_p.
\end{equation}
By definition of $\Delta_q$ and Young's inequality,
\begin{equation}\label{commu}
\|[\Delta_q,u_{\leq{p-2}}\cdot\nabla] v_p\|_{r}
\lesssim \|\nabla u_{\leq p-2}\|_\infty\|v_p\|_{r},
\end{equation}
for any $r>1$ (see \cite{CD-sqg1} for details).

\bigskip

\section{Absorbing sets} \label{absorbing_sets}

First, we recall the $L^\infty$ estimates from \cite{CD-sqg1}. 
\begin{Lemma}
\label{Linfty}
Let $\alpha \in (0, 2)$ and $\th$ be a solution of \eqref{QG} on $[0,\infty)$ with $\th(0) \in L^2$ and
\[
\sup_{t>0} \|f(t)\|_p \leq F,
\]
for some $F\geq 0$ and $p\in(2/\alpha,\infty]$. Then,
for every $t>0$,
\begin{equation}\label{infty-norm}
\|\theta(t)\|_{L^\infty}\lesssim  \frac{\|\theta(0)\|_{2}}{(\nu t)^{\frac1\alpha}}+
\lambda_0^{\frac2p -\alpha} \frac{F}{\nu}\left(1+\lambda_0^{\frac{\alpha}{2}-1}(\nu t)^{\frac{1}{2}-\frac{1}{\alpha}}\right).
\end{equation}
\end{Lemma}
\begin{proof}
Identical to the proof of Lemma 4.2 in \cite{CD-sqg1} provided $\|f\|_p$ is replaced with $F$. 
\end{proof}

Due to the energy equality
\[
\|\th(t)\|_2^2 = \|\th(t_0)\|_2^2 + \int_{t_0}^t \left(-\nu \| \Lambda^{\frac{\alpha}{2}} \th(\tau) \|_2 +(f(\tau),\th(\tau))\right) \, d\tau, \qquad 0 \leq t_0 \leq t,
\]
and the fact that
\[
\|\Lambda^{-\frac{\alpha}{2}}f\|_2 \lesssim \lambda_0^{\frac{2}{p}-1-\frac{\alpha}{2}} \|f\|_p \leq \lambda_0^{\frac{2}{p}-1-\frac{\alpha}{2}} F,
\]
we have
\[
\|\th(t)\|_2^2 \lesssim \|\th(0)\|_2^2e^{-\nu(2\pi\lambda_0)^\alpha t}+\frac{\lambda_0^{\frac{4}{p}-2-2\alpha}F^2}{\nu^2}\left(1-e^{-\nu(2\pi\lambda_0)^\alpha t }\right), \qquad t>0.
\]
which implies the existence of an absorbing ball in $L^2$.
Indeed, for any bounded set $U \subset L^2$ there exists  time $t_{L^2}$, such that
\[
\th(t) \in B_{L^2}, \qquad \forall t \geq t_{L^2},
\]
for any solution $\th(t)$ with $\th(0) \in U$. Here
\[
B_{L^2} = \left\{ \th \in L^2: \|\th\|_2 \leq R_{2} \right\}, \qquad R_{2} \sim  \lambda_0^{\frac{2}{p}-1-\alpha} \frac{F}{\nu}.
\]
No consider the following ball in $L^\infty$:
\[
B_{L^\infty}=\left\{\th \in B_{L^2}: \|\th\|_\infty \leq  R_\infty \right\}, \qquad
R_\infty \sim \lambda_0^{\frac{2}{p}-\alpha}\frac{F}{\nu}.
\]
Lemma~\ref{Linfty} implies that $B_{L^\infty}$ is an absorbing set as well, i.e., for any bounded set $U \subset L^2$ there exists  time $t_{L^\infty}$, such that
\[
\th(t) \in B_{L^\infty}, \qquad \forall t\geq t_{L^\infty},
\]
for any solution $\th(t)$ with $\th(0) \in U$.

\qed

\section{Proof of the main result}
\label{sec:pf}

First we recall a generalization of Gr\"onwall's lemma from \cite{FMTT}.

\begin{Lemma} \label{L:GenGronwall}
Let $\alpha(t)$ be a locally integrable real valued function on $(0,\infty)$, satisfying for some $0 < T < \infty$  the following conditions:
\[
\liminf_{t \to \infty} \int_t^{T+t} \phi(\tau) \, d\tau >0, \qquad \limsup_{t \to \infty} \int_t^{T+t} \phi^-(\tau) \, d\tau < \infty,
\]
where $\phi^-= \max\{ -\phi, 0\}$. Let $\psi(t)$ be a measurable real valued function on $(0,\infty)$ such that
\[
\psi(t) \to 0, \qquad \text{as} \qquad t \to \infty.
\]
Suppose $\xi(t)$ is an absolutely continuous non-negative function on $(0, \infty)$ such that
\[
\frac{d}{dt} \xi + \phi \xi \leq \psi, \qquad \text{a.e. on} \ (0,\infty).
\]
Then
\[
\xi(t) \to 0 \qquad \text{as} \qquad t \to \infty.
\]
\end{Lemma}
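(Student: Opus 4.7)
The plan is to use the integrating-factor trick to convert the differential inequality into an integral inequality, and then control the two resulting terms separately using the two hypotheses on $\phi$.

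First I would pick any $t_0 > 0$, multiply $\xi' + \phi \xi \leq \psi$ by $\exp\bigl(\int_{t_0}^t \phi(\tau)\,d\tau\bigr)$, and integrate from $t_0$ to $t$ to obtain
$$\xi(t) \leq \xi(t_0)\,\exp\Bigl(-\!\int_{t_0}^t \phi(\tau)\,d\tau\Bigr) + \int_{t_0}^t \psi(s)\,\exp\Bigl(-\!\int_s^t \phi(\tau)\,d\tau\Bigr) ds.$$
By the hypotheses, there exist $\delta>0$, $M>0$, and $t_\ast>0$ such that for all $t \geq t_\ast$
$$\int_t^{t+T}\!\phi(\tau)\,d\tau \geq \delta, \qquad \int_t^{t+T}\!\phi^-(\tau)\,d\tau \leq M.$$

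For the homogeneous term, I would take $t_0 \geq t_\ast$ and partition $[t_0,t]$ into $n = \lfloor (t-t_0)/T\rfloor$ full blocks of length $T$ plus a remainder of length less than $T$. The full blocks contribute at least $n\delta$, and on the remainder the loss is at most $M$ (from the second hypothesis on $\phi^-$ applied to the next full block), so $\int_{t_0}^t \phi \geq n\delta - M \to \infty$ as $t\to\infty$. Hence the first term tends to $0$.

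For the forcing term, given $\varepsilon>0$ I would enlarge $t_0$ so that in addition $|\psi(s)| \leq \varepsilon$ for all $s \geq t_0$. The key step is a $t$-uniform bound on $\int_{t_0}^t \exp(-\int_s^t \phi)\,ds$, obtained by partitioning $[t_0,t]$ backward from $t$ into blocks $[t-(k+1)T,\,t-kT]$. For $s$ in the $k$-th block, the same block estimate gives $-\int_s^t \phi \leq 2M - k\delta$, so the integral over that block is at most $T\,e^{2M}e^{-k\delta}$. Summing the geometric series yields a bound independent of $t$, say $C = Te^{2M}/(1-e^{-\delta})$. Thus the forcing term is at most $C\varepsilon$, and since $\varepsilon$ was arbitrary we conclude $\xi(t)\to 0$.

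The main obstacle is the careful bookkeeping at the boundaries of the partitions: the partial block in the homogeneous estimate, and the partial block nearest $t_0$ in the forcing estimate, both must lose only a bounded amount, which is exactly what the uniform $\phi^-$ bound provides. A secondary subtlety is that the threshold $t_0$ needs to be chosen \emph{after} both requirements (smallness of $\psi$ beyond $t_0$ and the $\phi$-bounds beyond $t_0$) are imposed, so one should fix $t_0$ once and for all, then let $t\to\infty$ in the integrated inequality.
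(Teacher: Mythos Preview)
The paper does not supply its own proof of this lemma; it merely recalls the statement from \cite{FMTT} and uses it as a black box. Your integrating-factor argument with the block partition is exactly the standard proof of this generalized Gr\"onwall inequality and is correct as outlined. One small remark: for $s$ in the $k$-th backward block your bound $-\int_s^t\phi\le 2M-k\delta$ is slightly wasteful (a single $M$ suffices, since the partial piece $[s,t-kT]$ sits inside $[s,s+T]$ with $s\ge t_\ast$), but this does not affect the conclusion.
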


Now we are ready to prove the main result.
\bigskip

\noindent
{\em Proof of Theorem~\ref{thm}.}
Consider two solution $\theta_1$, $\theta_2$ of \eqref{QG} with forces $f_1$ and $f_2$. Let $t_0$ be a time after which the solutions stay in the absorbing set $B_{L^\infty}$:
\[
\|\theta_1(t)\|_\infty \leq R_\infty, \qquad \|\theta_2(t)\|_\infty \leq R_\infty, \qquad t \geq t_0.
\]
In what follows we assume that $t\geq t_0$. Denote $u_1=R^\perp\th_1$ and $u_2=R^\perp\th_2$.
Let $f=f_1-f_2$ and $w=\th_1-\th_2$, which satisfies the equation
\begin{equation} \label{eq-w}
w_t+u_1\cdot\nabla w+\nu\Lambda^\alpha w+R^\perp w\cdot\nabla \th_2=f.
\end{equation}

Projecting equation (\ref{eq-w}) onto the $q$-th shell, multiplying by $lw_q|w_q|^{l-2}$, integrating, adding up for all $q\geq -1$, applying Lemma \ref{lp}, H\"older and Young inequalities, yield
\begin{equation}\label{w2}
\begin{split}
\frac{d}{dt} \|w(t)\|_{B^0_{l,l}}^l  + C \nu \|\Lambda^{\alpha/l} w\|_{B^0_{l,l}}^l -\left(\frac{2}{C\nu} \right)^{l-1}l^{l-2}\| \Lambda^{-\alpha\left(1-\frac{1}{l}\right)}& f\|^l_{B^0_{l,l}} \leq \\
 -l\sum_{q\geq -1}\int_{\T^3}\Delta_q(R^\perp w\cdot\nabla \th_2) & w_q|w_q|^{l-2}\, dx\\
- l\sum_{q\geq -1}\int_{\T^3}\Delta_q(u_1 \cdot\nabla w) & w_q|w_q|^{l-2}\, dx \\
=& I +  J,
\end{split}
\end{equation}
for some absolute constant $C$.
Using Bony's paraproduct formula, $I$ is decomposed as
\begin{equation}\notag
\begin{split}
I=&
-l\sum_{q\geq -1}\sum_{|q-p|\leq 2}\int_{\T^3}\Delta_q(R^\perp w_{\leq{p-2}}\cdot\nabla(\th_2)_p) w_q|w_q|^{l-2}\, dx\\
&-l\sum_{q\geq -1}\sum_{|q-p|\leq 2}\int_{\T^3}\Delta_q(R^\perp w_{p}\cdot\nabla (\th_2)_{\leq{p-2}})w_q|w_q|^{l-2}\, dx\\
&-l\sum_{q\geq -1}\sum_{p\geq q-2}\int_{\T^3}\Delta_q(R^\perp\tilde w_p\cdot\nabla (\th_2)_p)w_q|w_q|^{l-2}\, dx\\
 =& I_{1}+I_{2}+I_{3}.
\end{split}
\end{equation}
Recall that $\lb=2^{Q}/L$. To estimate $I_1$ we use H\"older's inequality and split it as follows:
\begin{equation}\notag
\begin{split}
|I_{1}|&\leq l\sum_{q\geq -1}\sum_{|q-p|\leq 2}\int_{\T^3}\left|\Delta_q(R^\perp w_{\leq{p-2}}\cdot\nabla(\th_2)_p) w_q\right||w_q|^{l-2}\, dx\\
&\lesssim l\sum_{q> Q}\| w_q\|_l^{l-1}\sum_{|q-p|\leq 2}\lambda_p\|(\th_2)_p\|_\infty\sum_{Q<p'\leq p-2}\|R^\perp w_{p'}\|_l\\
&+l\sum_{q> Q}\| w_q\|_l^{l-1}\sum_{|q-p|\leq 2}\lambda_p\|(\th_2)_p\|_\infty\|R^\perp w_{\leq Q}\|_l\\
&+l\sum_{q\leq Q}\| w_q\|_l^{l-1}\sum_{|q-p|\leq 2}\lambda_p\|(\th_2)_p\|_\infty\|R^\perp w_{\leq p-2}\|_l\\
&\equiv I_{11}+I_{12}+I_{13}.
\end{split}
\end{equation}
 Then using Young's inequality, Jensen's inequality and the fact that $\|R^\perp w_q\|_l\lesssim \|w_q\|_l$, we obtain
\begin{equation}\notag
\begin{split}
|I_{11}|
&\lesssim R_\infty l\sum_{p> Q-2}\lambda_p\sum_{|q-p|\leq 2}\| w_q\|_l^{l-1}\sum_{Q<p'\leq p-2}\|R^\perp w_{p'}\|_l\\
&\lesssim R_\infty l\sum_{p> Q}\lambda_p\| w_p\|_l^{l-1}\sum_{Q<p'\leq p-2}\|R^\perp w_{p'}\|_l\\
&\lesssim \lb^{1-\alpha+\frac\alpha l}R_\infty l \sum_{p> Q}\lambda_p^{\frac{\alpha(l-1)}{l}}\| w_p\|_l^{l-1}\sum_{Q<p'\leq p-2}\|R^\perp w_{p'}\|_l\\
&\lesssim \lb^{1-\alpha} R_\infty l\sum_{p> Q}\lambda_p^{\frac{\alpha(l-1)}{l}}\| w_p\|_l^{l-1}\sum_{Q<p'\leq p-2}\lambda_{p'}^{\frac \alpha l}\|R^\perp w_{p'}\|_l\lambda_{p'-Q}^{-\frac \alpha l}\\
&\lesssim \lb^{1-\alpha} R_\infty l\sum_{q>Q}\lambda_q^\alpha\|w_q\|_l^l,
\end{split}
\end{equation}
where we needed $1-\alpha+\frac\alpha l<0$, i.e., $l>\alpha/(\alpha-1)$.  Now we take small enough $\epsilon>0$, such that $1-\alpha+\frac\alpha l+\epsilon<0$,
and use H\"older's inequality, Young's inequality, and Jensen's inequality to infer
\[
\begin{split}
I_{12}&\lesssim R_\infty l\sum_{q> Q}\| w_q\|_l^{l-1}\sum_{|q-p|\leq 2}\lambda_p\|R^\perp w_{\leq Q}\|_l\\
&\lesssim R_\infty l\sum_{q> Q-2}\lambda_q\| w_q\|_l^{l-1}\|R^\perp w_{\leq Q}\|_l\\
&= R_\infty l\sum_{q> Q-2}\lambda_q^{1-\alpha+\frac\alpha l+\epsilon}\lambda_q^{-\epsilon}\lambda_q^{\frac{\alpha(l-1)}{l}}\| w_q\|_l^{l-1}\|R^\perp w_{\leq Q}\|_l\\
&\lesssim \lb^{1-\alpha+\frac\alpha l+\epsilon}R_\infty l\sum_{q> Q-2}\lambda_q^{-\epsilon}\lambda_q^{\frac{\alpha(l-1)}{l}}\| w_q\|_l^{l-1}\|R^\perp w_{\leq Q}\|_l\\
&\lesssim \lb^{1-\alpha}R_\infty l\left(\sum_{q> Q-2}\lambda_q^{-\epsilon}\lambda_q^{\frac{\alpha(l-1)}{l}}\| w_q\|_l^{l-1}\right)^{\frac{l}{l-1}}+\lb^{1+\epsilon l} R_\infty l\|R^\perp w_{\leq Q}\|_l^l\\
&\lesssim \lb^{1-\alpha-\epsilon}R_\infty l\sum_{q> Q-2}\lambda_q^\alpha\| w_q\|_l^{l}+\lb^{1+\epsilon l} R_\infty l\|w_{\leq Q}\|_l^l;
\end{split}
\]
and similarly,
\[
\begin{split}
I_{13}&\lesssim R_\infty l\sum_{q\leq Q}\| w_q\|_l^{l-1}\sum_{|q-p|\leq 2}\lambda_p\|R^\perp w_{\leq p-2}\|_l\\
&\lesssim R_\infty l\sum_{q\leq Q}\lambda_q\| w_q\|_l^{l-1}\|R^\perp w_{\leq Q}\|_l\\
&= R_\infty l\sum_{q\leq Q}\lambda_q^{1-\alpha+\frac\alpha l+\epsilon}\lambda_q^{-\epsilon}\lambda_q^{\frac{\alpha(l-1)}{l}}\| w_q\|_l^{l-1}\|R^\perp w_{\leq Q}\|_l\\
&\lesssim \lb^{1-\alpha}R_\infty l\sum_{q\leq Q}\lambda_q^{-\epsilon}\lambda_q^{\frac{\alpha(l-1)}{l}}\| w_q\|_l^{l-1}\lb^{\alpha-1}\|R^\perp w_{\leq Q}\|_l\\
&\lesssim \lb^{1-\alpha}R_\infty l\left(\sum_{q\leq Q}\lambda_q^{-\epsilon}\lambda_q^{\frac{\alpha(l-1)}{l}}\| w_q\|_l^{l-1}\right)^{\frac l{l-1}}+\lb^{(l-1)(\alpha-1)} R_\infty l\|R^\perp w_{\leq Q}\|_l^l\\
&\lesssim \lb^{1-\alpha}R_\infty l\sum_{q\leq Q}\lambda_q^\alpha\| w_q\|_l^{l}+\lb^{(l-1)(\alpha-1)} R_\infty l\|w_{\leq Q}\|_l^l.
\end{split}
\]
For $I_2$, splitting the summation and using H\"older's inequality, we obtain 
\[
\begin{split}
|I_{2}|&\lesssim l\sum_{q\geq -1}\sum_{|q-p|\leq 2}\int_{\R^3}|\Delta_q(R^\perp w_{p}\cdot\nabla (\th_2)_{\leq{p-2}})w_q||w_q|^{l-2}\, dx\\
&\lesssim l\sum_{q> Q-2}\sum_{|p-q|\leq2}\sum_{p'\leq p-2}\lambda_{p'}\|(\th_2)_{p'}\|_\infty\|R^\perp w_p\|_l\|w_q\|_l^{l-1}\\
&+l\sum_{q\leq Q-2}\sum_{|p-q|\leq2}\sum_{p'\leq p-2}\lambda_{p'}\|(\th_2)_{p'}\|_\infty\|R^\perp w_p\|_l\|w_q\|_l^{l-1}\\
&\equiv I_{21}+I_{22}.
\end{split}
\]
The first term is estimated as 
\[
\begin{split}
I_{21}
&\lesssim  R_\infty l\sum_{p>Q-4} \|w_p\|_l^l\sum_{p'\leq p-2}\lambda_{p'}\\
&\lesssim R_\infty l\sum_{p>Q-4}\lambda_p^\alpha \| w_p\|_l^l\sum_{p'\leq p-2}\lambda_{p'-p}\lambda_p^{1-\alpha}\\
&\lesssim \lb^{1-\alpha}R_\infty l\sum_{p>Q-4}\lambda_p^\alpha \| w_p\|_l^l.
\end{split}
\]
For the second term we have 
\[
\begin{split}
I_{22}&\lesssim R_\infty l\sum_{q\leq Q-2}\sum_{|p-q|\leq2}\sum_{p'\leq p-2}\lambda_{p'}\|R^\perp w_p\|_l\|w_q\|_l^{l-1}\\
&\lesssim R_\infty l\sum_{q\leq Q}\|w_q\|_l^{l}\sum_{p'\leq Q}\lambda_{p'}\\
&\lesssim \lb R_\infty l\sum_{q\leq Q}\|w_q\|_l^{l}.
\end{split}
\]
To estimate $I_{3}$, we first integrate by parts and then
use H\"older's inequality obtaining
\[
\begin{split}
|I_{3}|&\leq l\sum_{q\geq -1}\sum_{p\geq q-2}\int_{\mathbb R^3}|\Delta_q(R^\perp\tilde w_p (\th_2)_p) \nabla (w_q|w_q|^{l-2})| \, dx\\
&\lesssim l^2\sum_{q\geq -1}\sum_{p\geq q-2}\int_{\mathbb R^3}|\Delta_q(R^\perp\tilde w_p (\th_2)_p) \nabla w_q||w_q|^{l-2} \, dx\\
&\lesssim l^2\sum_{q>Q}\lambda_q\|w_q\|_{l}^{l-1}\sum_{p\geq q-2}\|R^\perp\tilde w_p\|_l\|(\th_2)_p\|_\infty\\
&+l^2\sum_{q\leq Q}\lambda_q\|w_q\|_{l}^{l-1}\sum_{p\geq q-2}\|R^\perp\tilde w_p\|_l\|(\th_2)_p\|_\infty\\
&\equiv I_{31}+I_{32}.
\end{split}
\]
For the first term we use Jensen's inequality:
\[
\begin{split}
I_{31}&\lesssim R_\infty l^2\sum_{p> Q-3}\|R^\perp w_p\|_l\sum_{Q<q\leq p+2}\lambda_q\|w_q\|_l^{l-1}\\
&\lesssim R_\infty l^2\sum_{p> Q-3}\lambda_p^{\frac\alpha l}\|w_p\|_l\sum_{Q<q\leq p+2}\lambda_q^{\frac{\alpha(l-1)}{l}}\|w_q\|_l^{l-1}\lambda_q^{1-\alpha}\lambda_{q-p}^{\frac\alpha l}\\
&\lesssim \lb^{1-\alpha}R_\infty l^2\sum_{q> Q-3}\lambda_q^\alpha\|w_q\|_l^l.
\end{split}
\]
For the second term, H\"older's inequality, Young's inequality, and Jensen's inequality yield
\[
\begin{split}
I_{32}&\lesssim R_\infty l^2\sum_{q\leq Q}\lambda_q\|w_q\|_l^{l-1}\sum_{p\geq q-2}\|R^\perp \tilde w_p\|_l\\
&\lesssim R_\infty l^2\sum_{q\leq Q}\|w_q\|_l^{l-1}\sum_{p\geq q-2}\lambda_p^{\frac\alpha l}\|R^\perp \tilde w_p\|_l\lambda_{p-q}^{-\frac\alpha l}\lambda_q^{1-\frac\alpha l}\\
&\lesssim \lb^{1-\frac\alpha l} R_\infty l^2\sum_{q\leq Q}\|w_q\|_l^{l-1}\sum_{p\geq q-2}\lambda_p^{\frac\alpha l}\|R^\perp \tilde w_p\|_l\lambda_{p-q}^{-\frac\alpha l}\\
&\lesssim \lb R_\infty l^2\sum_{q\leq Q}\left(\|w_q\|_l^{l}+\lb^{-\alpha}\left(\sum_{p\geq q-2}\lambda_p^{\frac\alpha l}\|R^\perp \tilde w_p\|_l\lambda_{p-q}^{-\frac\alpha l}\right)^l\right)\\
&\lesssim \lb R_\infty l^2\sum_{q\leq Q}\|w_q\|_l^{l}+\lb^{1-\alpha}R_\infty l^2\sum_{q\leq Q}\left(\sum_{p\geq q-2}\lambda_p^{\frac\alpha l}\|R^\perp \tilde w_p\|_l\lambda_{p-q}^{-\frac\alpha l}\right)^l\\
&\lesssim \lb^{1-\alpha}R_\infty l^2\sum_{q \geq -1}\lambda_q^\alpha\|w_q\|_l^l+ \lb R_\infty l^2\sum_{q\leq Q}\|w_q\|_l^{l}.
\end{split}
\]
Therefore, for $l$ such that $1-\alpha+\frac\alpha l<0$ we have
\begin{equation}\label{est-i1}
\begin{split}
|I| &\lesssim \lb^{1-\alpha}R_\infty l^2\sum_{q\geq -1}\lambda_q^\alpha\|w_q\|_l^l+\left(\lb^{(l-1)(\alpha-1)}+\lb^{1+\epsilon l}\right)R_\infty l^2 \sum_{q\leq Q}\|w_q\|_l^{l}\\
&\lesssim \lb^{1-\alpha}R_\infty l^2\sum_{q\geq -1}\lambda_q^\alpha\|w_q\|_l^l+\lb^{(l-1)(\alpha-1)}R_\infty l^2 \sum_{q\leq Q}\|w_q\|_l^{l},
\end{split}
\end{equation}
where $\epsilon$ is chosen  small enough so that $1-\alpha+\frac\alpha l+\epsilon<0$ and hence
$(l-1)(\alpha-1)>1+\epsilon l$.

We now estimate $J$, where we first apply Bony's paraproduct formula:
\begin{equation}\notag
\begin{split}
J=
&-l\sum_{q\geq -1}\sum_{|q-p|\leq 2}\int_{\R^3}\Delta_q((u_1)_{\leq{p-2}}\cdot\nabla w_p) w_q|w_q|^{l-2}\, dx\\
&-l\sum_{q\geq -1}\sum_{|q-p|\leq 2}\int_{\R^3}\Delta_q((u_1)_{p}\cdot\nabla w_{\leq{p-2}})w_q|w_q|^{l-2}\, dx\\
&-l\sum_{q\geq -1}\sum_{p\geq q-2}\int_{\R^3}\Delta_q((u_1)_p\cdot\nabla\tilde w_p)w_q|w_q|^{l-2}\, dx\\
=&J_{1}+J_{2}+J_{3}.
\end{split}
\end{equation}
Observing that $\sum_{|p-q|\leq 2}\Delta_qw_p=w_q$, we then decompose $J_{1}$ using the commutator notation \eqref{eq:CommutatodDef}:
\begin{equation}\notag
\begin{split}
J_{1}=&-l\sum_{q\geq -1}\sum_{|q-p|\leq 2}\int_{\R^3}[\Delta_q, (u_1)_{\leq{p-2}}\cdot\nabla] w_pw_q|w_q|^{l-2}\, dx\\
&-l\sum_{q\geq -1}\int_{\R^3} (u_1)_{\leq q-2}\cdot\nabla w_q w_q|w_q|^{l-2}\, dx\\
&-l\sum_{q\geq -1}\sum_{|q-p|\leq 2}\int_{\R^3}( (u_1)_{\leq{p-2}}- (u_1)_{\leq q-2})\cdot\nabla\Delta_qw_p w_q|w_q|^{l-2}\, dx\\
=&J_{11}+J_{12}+J_{13}.
\end{split}
\end{equation}
The term $J_{12}$ vanishes because $\div\, (u_1)_{\leq q-2}=0$. 
To estimate $J_{11}$ we will use \eqref{commu},   
\begin{equation}\notag
\|[\Delta_q, (u_1)_{\leq{p-2}}\cdot\nabla] w_p\|_l\\
\lesssim \|\nabla  (u_1)_{\leq p-2}\|_\infty\|w_p\|_l.
\end{equation}
Then splitting the summation we get
\begin{equation}\notag
\begin{split}
|J_{11}|
&\leq l\sum_{q\geq -1}\sum_{|q-p|\leq 2}\|[\Delta_q, (u_1)_{\leq{p-2}}\cdot\nabla] w_p\|_l\|w_q\|_l^{l-1}\\
&\leq l\sum_{q>Q-2}\sum_{|q-p|\leq 2}\|\nabla (u_1)_{\leq p-2}\|_\infty\|w_p\|_l\|w_q\|_l^{l-1}\\
&+l\sum_{q\leq Q-2}\sum_{|q-p|\leq 2}\|\nabla (u_1)_{\leq p-2}\|_\infty\|w_p\|_l\|w_q\|_l^{l-1}\\
&\equiv J_{111}+J_{112}.
\end{split}
\end{equation}
Now note that $\| (u_1)_q\|_\infty\lesssim \|(\th_1)_q\|_\infty \leq R_\infty$. So
using H\"older's and Bernstein's inequalities, we obtain
\begin{equation}\notag
\begin{split}
J_{111}&\lesssim l\sum_{q> Q-2}\sum_{|q-p|\leq 2}\sum_{p'\leq q}\lambda_{p'}\|(u_1)_{p'}\|_\infty\|w_p\|_l\|w_q\|_l^{l-1}\\
&\lesssim R_\infty l\sum_{q> Q-2}\sum_{|q-p|\leq 2}\sum_{p'\leq q}\lambda_{p'}\|w_p\|_l\|w_q\|_l^{l-1}\\
&\lesssim R_\infty l\sum_{q> Q-4}\sum_{p'\leq q}\lambda_{p'}\|w_q\|_l^l\\
&\lesssim R_\infty l\sum_{q> Q-4}\lambda_q^\alpha\|w_q\|_l^l\sum_{p'\leq q}\lambda_{p'-q}\lambda_q^{1-\alpha}\\
&\lesssim \lb^{1-\alpha}R_\infty l\sum_{q> Q-4}\lambda_q^\alpha\|w_q\|_l^l.
\end{split}
\end{equation}
Similarly,
\begin{equation}\notag
\begin{split}
J_{112}&\lesssim l\sum_{q\leq Q-2}\sum_{|q-p|\leq 2}\sum_{p'\leq p-2}\lambda_{p'}\|(u_1)_{p'}\|_\infty\|w_p\|_l\|w_q\|_l^{l-1}\\
&\lesssim R_\infty l\sum_{q\leq Q-2}\sum_{|q-p|\leq 2}\sum_{p'\leq q}\lambda_{p'}\|w_p\|_l\|w_q\|_l^{l-1}\\
&\lesssim R_\infty l\sum_{q\leq Q}\|w_q\|_l^{l}\sum_{p'\leq Q-2}\lambda_{p'}\\
&\lesssim \lb R_\infty l\sum_{q\leq Q}\|w_q\|_l^{l}.
\end{split}
\end{equation}
To estimate $J_{13}$, we first use H\"older's inequality and split the summation as follows:
\begin{equation}\notag
\begin{split}
|J_{13}|&\leq l\sum_{q\geq -1}\sum_{|q-p|\leq 2}\int_{\R^3}\left|( (u_1)_{\leq{p-2}}- (u_1)_{\leq q-2})\cdot\nabla\Delta_qw_p \right||w_q|^{l-1}\, dx\\
&\lesssim l\sum_{q>Q-2}\sum_{|q-p|\leq 2}\sum_{q-3\leq p'\leq q}\int_{\R^3}|(u_1)_{p'}||\nabla\Delta_qw_p ||w_q|^{l-1}\, dx\\
&+l\sum_{q\leq Q-2}\sum_{|q-p|\leq 2}\sum_{q-3\leq p'\leq q}\int_{\R^3}|(u_1)_{p'}||\nabla\Delta_qw_p ||w_q|^{l-1}\, dx\\
&\equiv J_{131}+J_{132}.
\end{split}
\end{equation}
Now Jensen's inequality yields
\begin{equation}\notag
\begin{split}
J_{131}
&\lesssim l\sum_{q>Q-2}\|w_q\|_l^{l-1}\sum_{|q-p|\leq 2}\lambda_p\|w_p\|_l\sum_{q-3\leq p'\leq q}\|(u_1)_{p'}\|_\infty\\
&\lesssim  R_\infty l\sum_{q>Q-2}\|w_q\|_l^{l-1}\sum_{|q-p|\leq 2}\lambda_p\|w_p\|_l\\
&\lesssim  R_\infty l\sum_{q>Q-2}\lambda_q^{\frac{\alpha(l-1)}{l}}\|w_q\|_l^{l-1}\sum_{|q-p|\leq 2}\lambda_p^{\frac{\alpha}{l}}\|w_p\|_l\lambda_{p-q}^{1-\frac{\alpha}{l}}\lambda_q^{1-\alpha}\\
&\lesssim  \lb^{1-\alpha}R_\infty l\sum_{q>Q-4}\lambda_q^{\alpha}\|w_q\|_l^{l}.
\end{split}
\end{equation}
And similarly, for the second term,
\begin{equation}\notag
\begin{split}
J_{132}
&\lesssim l\sum_{q\leq Q-2}\|w_q\|_l^{l-1}\sum_{|q-p|\leq 2}\lambda_p\|w_p\|_l\sum_{q-3\leq p'\leq q}\|(u_1)_{p'}\|_\infty\\
&\lesssim  R_\infty l\sum_{q\leq Q-2}\|w_q\|_l^{l-1}\sum_{|q-p|\leq 2}\lambda_p\|w_p\|_l\\
&\lesssim  R_\infty l\sum_{q\leq Q}\lambda_q\|w_q\|_l^{l}\\
&\lesssim  \lb R_\infty l\sum_{q\leq Q}\|w_q\|_l^{l}.
\end{split}
\end{equation}
For $J_2$ we use H\"older's inequality obtaining
\begin{equation}\notag
\begin{split}
|J_{2}| &\leq l\sum_{q\geq -1}\sum_{|q-p|\leq 2}\int_{\R^3}\left|\Delta_q((u_1)_{p}\cdot\nabla w_{\leq{p-2}})\right||w_q|^{l-1}\, dx\\
&\lesssim l\sum_{q>Q}\|w_q\|_l^{l-1}\sum_{|q-p|\leq 2}\|(u_1)_p\|_\infty\sum_{p'\leq p-2}\lambda_{p'}\|w_{p'}\|_l\\
&+l\sum_{q\leq Q}\|w_q\|_l^{l-1}\sum_{|q-p|\leq 2}\|(u_1)_p\|_\infty\sum_{p'\leq p-2}\lambda_{p'}\|w_{p'}\|_l\\
&\equiv J_{21}+J_{22}.
\end{split}
\end{equation}
Recall that  $\| (u_1)_q\|_\infty\lesssim \|(\th_1)_q\|_\infty\leq R_\infty$. Hence we can use Jensen's inequality to deduce that
\begin{equation}\notag
\begin{split}
J_{21}
&\lesssim R_\infty l\sum_{q>Q}\|w_q\|_l^{l-1}\sum_{p'\leq q}\lambda_{p'}\|w_{p'}\|_l\\
&\lesssim R_\infty l \sum_{q>Q}\lambda_q^{\frac{\alpha(l-1)}{l}}\|w_q\|_l^{l-1}\sum_{p'\leq q}\lambda_{p'}^{\frac\alpha l}\|w_{p'}\|_l\lambda_{p'-q}^{1-\frac\alpha l}\lambda_q^{1-\alpha}\\
&\lesssim \lb^{1-\alpha}R_\infty l\sum_{q>Q}\lambda_q^\alpha\|w_q\|_l^l,
\end{split}
\end{equation}
where we needed $l>\alpha$.  While the second term is estimated as
\begin{equation}\notag
\begin{split}
J_{22}
&\lesssim R_\infty l\sum_{q\leq Q}\|w_q\|_l^{l-1}\sum_{p'\leq q}\lambda_{p'}\|w_{p'}\|_l\\
&\lesssim \lb R_\infty l \sum_{q\leq Q}\|w_q\|_l^{l-1}\sum_{p'\leq Q}\|w_{p'}\|_l\\
&\lesssim \lb QR_\infty l\sum_{q\leq Q}\|w_q\|_l^l.
\end{split}
\end{equation}
Since $\|(u_1)_q\|_{r}\lesssim \|(\th_1)_q\|_{r}$ for any $r\in(1,\infty]$, the term
$J_{3}$ enjoys the same estimate as $I_{3}$. Hence we conclude that
\begin{equation}\label{est-i2}
|J| \lesssim \lb^{1-\alpha}R_\infty l^2\sum_{q\geq -1}\lambda_q^\alpha\|w_q\|_l^l+\lb QR_\infty l^2\sum_{q\leq Q}\|w_q\|_l^l.
\end{equation}

Thanks to (\ref{w2})--(\ref{est-i2}), inequality \eqref{w2} yields
\begin{equation}\notag
\begin{split}
\frac{d}{dt} \|w(t)\|_{B^0_{l,l}}^l  &\leq  - C \nu\|\Lambda^{\alpha/l} w\|_{B^0_{l,l}}^l
+C_1\lb^{1-\alpha}R_\infty l^2 \sum_{q>Q}\lambda_q^\alpha\|w_q\|_l^l \\
&+ \left(\frac{2}{C\nu} \right)^{l-1}l^{l-2}\| \Lambda^{-\alpha\left(1-\frac{1}{l}\right)} f\|^l_{B^0_{l,l}} + C_2\lb^{(l-1)(\alpha-1)}R_\infty l^2 \sum_{q\leq Q}\|w_q\|_l^l,
\end{split}
\end{equation}
for some absolute constants $C$, $C_1$, and $C_2$. Thus we have
\[
\frac{d}{dt} \|w(t)\|_{B^0_{l,l}}^l + \phi \|w(t)\|_{B^0_{l,l}}^l \leq \psi(t),
\]
where
\[
\begin{split}
\phi=& {\textstyle \frac12}(2\pi \lambda_0)^\alpha C\nu,\\
\psi(t)=& \left(\frac{2}{C\nu} \right)^{l-1}l^{l-2}\| \Lambda^{-\alpha\left(1-\frac{1}{l}\right)} f\|^l_{B^0_{l,l}} + C_2\lb^{(l-1)(\alpha-1)}R_\infty l^2 \sum_{q\leq Q}\|w_q\|_l^l,
\end{split}
\]
provided
\[
\lb = \left(\frac{2C_1l^2}{C\nu} R_\infty\right)^\frac{1}{\alpha-1}.
\]
Note that
\[
\psi(t) \to 0 \qquad \text{as} \qquad t \to \infty,
\]
due to the assumption of the theorem. Since also $\alpha >0$,
the first part of the theorem follows from Lemma~\ref{L:GenGronwall}.

To prove the second part, where $\psi(t) \equiv 0$,
we note that
\[
\|w(t)\|_{B^0_{l,l}}^l \leq \|w(t_0)\|_{B^0_{l,l}}^l e^{-\alpha (t-t_0)}, \qquad t_0 \leq t \leq 0, 
\]
thanks to Gr\"onwall's inequality.
Since $\th_1$  and $\th_2$ are ancient solutions,
\[
\th_1(t), \th_2(t) \in B_{L^2}\cap B_{L^\infty}, \qquad \forall t \leq 0.
\]
Hence, we have
\[
\begin{split}
\|w(t)\|_{B^0_{l,l}} &\lesssim \|w(t)\|_\infty^{1-\frac{2}{l}} \|w(t)\|_2^{\frac{2}{l}}\\
&\lesssim R_{\infty}^{1-\frac{2}{l}} R_{2}^{\frac{2}{l}},
\end{split}
\]
for all $t$. Taking the limit as $t_0 \to -\infty$ gives $w(t) =0$ for all $ t \leq 0$, and hence $w \equiv 0$.

\qed

\section*{Acknowledgement}
The authors would wish to express their gratitude to the anonymous referee for the careful review and valuable suggestions which helped to improve the manuscript a lot.


\begin{thebibliography}{XX}




\bibitem{BCD}
H. Bahouri, J. Chemin,  and R. Danchin.
\newblock {\em Fourier analysis and nonlinear partial differential equations}.
\newblock Grundlehrender Mathematischen Wissenschaften, 343. Springer, Heidelberg, 2011.

\bibitem{CaV}
L.A. Caffarelli and A. Vasseur.
\newblock {\em Drift diffusion equations with fractional diffusion and the quasi-geostrophic equation}.
\newblock Ann. of Math. (2), 171(3):1903--1930, 2010.

\bibitem{CMZ}
Q. Chen, C. Miao and Z. Zhang.
\newblock {\em A new Bernstein inequality and the 2D dissipative quasi-geostrophic equation}.
\newblock Commun. Math. Phys., 271:821--838, 2007.

\bibitem{C}
A. Cheskidov.
\newblock {\em Global attractors of evolutionary systems}.
\newblock Journal of Dynamics and Differential Equations, 21: 249--268, 2009.



\bibitem{CD-sqg1}
A. Cheskidov and M. Dai.
\newblock {\em Determining modes for the surface quasi-geostrophic equation}.
\newblock Physica D: Nonlinear Phenomena, Vol. 376-377: 204--215, 2018.

\bibitem{CD-nse}
A. Cheskidov and M. Dai.
\newblock {\em Kolmogorov's dissipation number and the number of degrees of freedom for the 3D Navier-Stokes equations}.
\newblock Proceedings of the Royal Society of Edinburg, Section A, Vol. 149 (2): 429--446, 2019.

\bibitem{CD}
A. Cheskidov and M. Dai.
\newblock {\em The existence of a global attractor for the forced critical surface quasi-geostrophic equation in $L^2$}.
\newblock Journal of Mathematical Fluid Mechanics, Vol. 20 (1): 213--225, 2018.

\bibitem{CDK}
A. Cheskidov, M. Dai and L. Kavlie.
\newblock {\em Determining modes for the 3D Navier-Stokes equations}.
\newblock Physica D: Nonlinear Phenomena, Vol. 374-375: 1--9, 2018.

\bibitem{CF}
A. Cheskidov and C. Foias.
\newblock {\em On global attractors of the 3D Navier-Stokes equations}.
\newblock J. Differential Equations, 231(2):714--754, 2006.

\bibitem{CK}
A. Cheskidov and L. Kavlie.
\newblock {\em Pullback attractors for generalized evolutionary systems}.
\newblock Discrete and Continuous Dynamical Systems-B, 20: 749--779, 2015.



\bibitem{CCV}
P. Constantin, M. Coti Zelati, and V. Vicol.
\newblock {\em Uniformly attracting limit sets for the critically dissipative SQG equation}.
\newblock Nonlinearity, Vol. 29 (2), 298.

\bibitem{CFMT}
P. Constantin, C. Foias, O. P. Manley, and R. Temam.
\newblock{\em  Determining modes and fractal dimension of turbulent flows}.
\newblock  J. Fluid Mech., 150:427--440, 1985. 

\bibitem{CFT}
P. Constantin, C. Foias, and R. Temam.
\newblock{\em On the dimension of the attractors in two-dimensional turbulence}.
\newblock Physica D, 30, 284--296, 1988.

\bibitem{CMT}
P. Constantin, A. J. Majda, and E. Tabak.
\newblock {\em Formation of strong fronts in the 2-D quasi-geostrophic thermal active scalar}.
\newblock Nonlinearity, 7(6):1495--1533, 1994.


\bibitem{CVicol}
P. Constantin and V. Vicol.
\newblock {\em Nonlinear maximum principles for dissipative linear nonlocal operators and applications}.
\newblock Geometric And Functional Analysis, 22(5):1289--1321, 2012.


\bibitem{CC}
A. C\'ordoba and D. C\'ordoba.
\newblock {\em A maximum principle applied to quasi-geostrophic equations}.
\newblock Comm. Math. Phys., 249(3):511--528, 2004.



\bibitem{FJKT}
C. Foias, M. Jolly, R. Kravchenko and E. Titi.
\newblock {\em A determining form for the 2D Navier-Stokes equations -- the Fourier modes case}.
\newblock J. Math. Phys., 53(11), 115623, 30 pp, 2012.

\bibitem{FJKTgeneral}
C. Foias, M. Jolly, R. Kravchenko and E. Titi.
\newblock {\em A unified approach to determining forms for  the 2D Navier-Stokes equations -- the general interpolants case}.
\newblock Russ. Math. Surv. 69, 359, 2014.

\bibitem{FMRT}
C. Foias, O. Manley, R. Rosa, and R. Temam.
\newblock {\em Navier-Stokes equations and turbulence}.
\newblock Vol. 83 of Encyclopedia of Mathematics and its Applications. Cambridge University Press,
Cambridge, 2001.

\bibitem{FMTT}
C. Foias, O. P. Manley, R Temam, and Y. M. Tr\'eve.
\newblock {\em Asymptotic analysis of the Navier-Stokes equations}.
\newblock  Phys. D, 9(1-2), 157--188, 1983.

\bibitem{FP}
C. Foias and G. Prodi.
\newblock {\em Sur le comportement global des solutions non-stationnaires
des \'equations de Navier--Stokes en dimension 2}.
\newblock Rend. Sem. Mat. Univ. Padova 39:1--34, 1967.

\bibitem{FT}
C. Foias and R. Temam.
\newblock {\em  Some analytic and geometric properties of the solutions of the Navier-Stokes equations}.
\newblock J. Math. Pures Appl., 58, 339--368, 1979. 

\bibitem{FT84}
C. Foias and R. Temam.
\newblock {\em Determination of the solutions of the Navier-Stokes equations by a set of nodal values}.
\newblock  Math. Comput., 43, 117--133, 1984.

\bibitem{FTiti}
C. Foias and  E.S. Titi.
\newblock {\em Determining nodes, finite difference schemes and inertial manifolds}.
\newblock Nonlinearity, 135--153, 1991.


\bibitem{Gr}
L. Grafakos.
\newblock {\em Modern Fourier analysis}.
\newblock Second edition. Graduate Texts in Mathematics, 250. Springer, New York, 2009.

\bibitem{JT}
D. A. Jones and E. S. Titi.
\newblock {\em Upper bounds on the number of determining modes, nodes, and volume elements for the Navier-Stokes equations}.
\newblock Indiana Univ. Math. J. 42(3):875--887, 1993.

\bibitem{KN09}
A. Kiselev, and F. Nazarov.
\newblock {\em A variation on a theme of Caffarelli and Vasseur}.
\newblock Zap. Nauchn. Sem. S.-Peterburg. Otdel. Mat. Inst. Steklov. (POMI), 370(Kraevye Zadachi Matematicheskoi Fiziki i Smezhnye Voprosy Teorii Funktsii. (40):58--72, 220, 2009.


\bibitem{KNV}
A. Kiselev, F. Nazarov and A. Volberg.
\newblock {\em Global well-posedness for the critical 2D dissipative quasi-geostrophic equation}.
\newblock Invent. Math., 167(3):445--453, 2007.


\bibitem{Re} S. Resnick.
\newblock {\em Dynamical problems in nonlinear advective partial differential equations}.
\newblock Ph.D. Thesis, University of Chicago, 1995.




\end{thebibliography}
\end{document}